\numberwithin{equation}{section}
\numberwithin{figure}{section}
\theoremstyle{plain}
\newtheorem{thm}{\protect\theoremname}
\theoremstyle{definition}
\newtheorem{defn}[thm]{\protect\definitionname}
\theoremstyle{plain}
\newtheorem{prop}[thm]{\protect\propositionname}
\theoremstyle{definition}
\newtheorem{example}[thm]{\protect\examplename}
\theoremstyle{remark}
\newtheorem{rem}[thm]{\protect\remarkname}
\theoremstyle{plain}
\newtheorem{lem}[thm]{\protect\lemmaname}
\newlist{casenv}{enumerate}{4}
\setlist[casenv]{leftmargin=*,align=left,widest={iiii}}
\setlist[casenv,1]{label={{\itshape\ \casename} \arabic*.},ref=\arabic*}
\setlist[casenv,2]{label={{\itshape\ \casename} \roman*.},ref=\roman*}
\setlist[casenv,3]{label={{\itshape\ \casename\ \alph*.}},ref=\alph*}
\setlist[casenv,4]{label={{\itshape\ \casename} \arabic*.},ref=\arabic*}
\providecommand{\casename}{Case}
\providecommand{\definitionname}{Definition}
\providecommand{\examplename}{Example}
\providecommand{\lemmaname}{Lemma}
\providecommand{\propositionname}{Proposition}
\providecommand{\remarkname}{Remark}
\providecommand{\theoremname}{Theorem}
\begin{document}
\title[Generating the Group of Nonzero Elements]{Generating the Group of Nonzero Elements of a Quadratic Extension
of $\mathbb{F}_{p}$}
\author{Jerry D Rosen}
\address{Professor, Department of Mathematics, California State University,
Northridge, 18111 Nordhoff St., Northridge, Ca 91311}
\email{jerry.rosen@csun.edu}
\author{Daniel Sarian}
\address{Graduate Student, Department of Mathematics, California State University,
Northridge, 18111 Nordhoff St., Northridge, Ca 91311}
\email{daniel.sarian.532@my.csun.edu}
\author{Susan Elizabeth Slome}
\address{Independent Researcher, 237 Old Willets Path, Smithtown, NY 11787}
\email{sslome1234@gmail.com (Corresponding Author)}
\date{February 15, 2022}
\begin{abstract}
It is well known that if $\mathbb{F}$ is a finite field then $\mathbb{F^{*}}$,
the set of nonzero elements of $\mathbb{F}$, is a cyclic group. In
this paper we will assume $\mathbb{F}=\mathbb{F}_{p}$ (the finite
field with $p$ elements, $p$ a prime) and $\mathbb{\mathbb{F}}_{p^{2}}$
is a quadratic extension of $\mathbb{F}_{p}$. In this case, the groups
$\mathbb{F}_{p}^{*}$ and {\normalsize{}$\mathbb{F}_{p^{2}}^{*}$}
have orders $p-1$ and $p^{2}-1$ respectively. We will provide necessary
and sufficient conditions for an element $u\in\mathbb{F}_{p^{2}}^{*}$
to be a generator. Specifically, we will prove $u$ is a generator
of $\mathbb{F}_{p^{2}}^{*}$ if and only if $N(u)$ generates $\mathbb{F}_{p}^{*}$
and $\text{\ensuremath{\frac{u^{2}}{N(u)}}}$ generates $Ker\,N$,
where $N:\mathbb{F}_{p^{2}}^{*}\rightarrow\mathbb{F}_{p}^{*}$ denotes
the norm map. 
\end{abstract}

\keywords{Groups, Number Theory, Finite Fields, Finite Extension, Abstract Algebra,
Primitive Roots, Quadratic Residues, Norm, Python}
\subjclass[2000]{11A07, 12F99}
\thanks{We would like to thank the referee for their valuable suggestions
which greatly improved this paper.}
\maketitle

\section{\label{sec:background}background}

Our paper has several purposes: a major topic in basic number theory
is primitive roots. If we let $U(n)$ denote all those classes $[k]$,
where $k$ and $n$ are relatively prime, then it is well known this
is a multiplicative group. If this group is cyclic, then the generators
are called primitive elements $mod$ $n.$ The classification of those
natural numbers having primitive roots occupies a significant portion
of elementary number theory. The classification theorem states: a
natural number $n$ will have a primitive root if and only if $n=2,4,p^{k},2p^{k},$
for some odd prime $p$ and some $k\geq1$. As a special case, the
nonzero elements of a finite field have primitive roots. We note that
there is no result which gives a formula stating which integers are
primitive roots for a given $n$. If $L/K$ is a finite extension
of finite fields, one can ask: can generators for $L^{*}$ be determined
by generators of $K^{*}$ and other subgroups? We were able to settle
this question for quadratic extensions of $\mathbb{F}{}_{p}$ and,
interestingly enough, the norm map plays a significant role. However,
beyond the result, we believe there are some didactic opportunities
embedded in our approach. We will use some basic ideas from abstract
algebra and number theory. Several of our results demonstrate how
group homomorphisms are a useful tool for transferring information
between groups. In addition, we use the notion of a quadratic residue
in several proofs. Hence, students reading this article can see some
basic ideas in action. Furthermore, experimentation is a fundamental
tool in all areas of mathematics research. The statement of our main
result was arrived at after much experimenting. We used computer code
to help us to formulate and verify our main result. Part of that code
can be accessed in a Python file located at \href{https://github.com/sslome/generator_util}{https://github.com/sslome/generator\_util}. 

Suppose K is a finite field. It is well known that (see reference
\cite{key-1})
\begin{enumerate}
\item $K$ is a vector space over $\mathcal{\mathbb{F}}_{p}$ (the field
of integers $mod\,p$, for a prime $p$)
\item $K$ has $p^{n}$ elements for some natural number $n$
\item $K^{*}$ the nonzero elements of $K$ is a cyclic group of order $p^{n}-1$ 
\end{enumerate}
If $K=\mathbb{\mathbb{F}}_{p^{2}}$ is a quadratic extension of $\mathbb{F}_{p}$,
then $K=\mathbb{F}_{p}(w)$, where $w$ is algebraic over $\mathbb{F}_{p}$
of degree $2$. We will make use of the language of number theory
throughout. 
\begin{defn}
Let $p$ be a prime integer. A nonzero integer $n$ is called a quadratic
residue (mod$\,p$) if $n\equiv a^{2}mod\,p$ for some integer $a$.
So the quadratic residues are simply the elements in $\mathbb{F}_{p}$
which are squares. An element which is not a quadratic residue is
called a quadratic nonresidue. Let $Q(p)$ denote the set of quadratic
residues $mod\,p$. 
\end{defn}

The next result counts the number of quadratic residues. There is
an easy number theoretic proof of this (see reference \cite{key-1}),
however we will show how homomorphisms can provide a method for counting.
\begin{prop}
For any prime $p>2$, there are $\frac{p-1}{2}$ quadratic residues
and $\frac{p-1}{2}$ quadratic non-residues.
\end{prop}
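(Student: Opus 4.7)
The plan is to use the squaring homomorphism, which is exactly the kind of group-theoretic tool the paper wants to showcase. Define $\phi:\mathbb{F}_p^*\to\mathbb{F}_p^*$ by $\phi(x)=x^2$. Since $\mathbb{F}_p^*$ is abelian, $\phi$ is a group homomorphism, and by definition its image is precisely $Q(p)$.

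First I would identify $\ker\phi$. An element $x\in\mathbb{F}_p^*$ lies in the kernel iff $x^2=1$, i.e.\ iff $(x-1)(x+1)=0$ in the field $\mathbb{F}_p$, so $x=\pm 1$. Because $p>2$, we have $1\neq -1$, hence $|\ker\phi|=2$.

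Next I would invoke the First Isomorphism Theorem to conclude
\[
Q(p)=\operatorname{im}\phi\;\cong\;\mathbb{F}_p^*/\ker\phi,
\]
so $|Q(p)|=(p-1)/2$. The number of nonresidues is then $|\mathbb{F}_p^*|-|Q(p)|=(p-1)-(p-1)/2=(p-1)/2$, completing the proof.

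There is essentially no obstacle here; the only point requiring care is the use of $p>2$ to ensure $-1\neq 1$ so that the kernel really has order $2$ (if $p=2$ the squaring map is an isomorphism and the counts degenerate). I would flag this step explicitly since it is the sole place where the hypothesis enters.
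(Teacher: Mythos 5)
Your proof is correct and follows essentially the same route as the paper: both use the squaring homomorphism, compute its kernel as $\{\pm 1\}$, and apply the fundamental (first) isomorphism theorem to count $|Q(p)|=\frac{p-1}{2}$. Your explicit remark about where the hypothesis $p>2$ enters is a nice touch but does not change the argument.
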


\begin{proof}
Define $f:\mathbb{F}_{p}^{*}\rightarrow Q(p)$ by $f(x)=x^{2}$. Then
$f$ is a group epimorphism. Now $x\in Ker\,f$ if and only if $x^{2}\equiv1\,mod\,p$
if and only if $p\mid(x^{2}-1)$ if and only if $x\equiv\pm1\,mod\,p$.
Thus $Kerf=\left\{ \pm1\right\} $ and by the fundamental homomorpism
theorem $\mathbb{F}_{p}^{*}/\left\{ \pm1\right\} \cong Q(p)$, and
thus $Q(p)$ must contain $\frac{p-1}{2}$ elements and since $\mathbb{F}_{p}^{*}$
has $p-1$ elements, the number of quadratic nonresidues is $(p-1)$$-\frac{p-1}{2}$$=$$\frac{p-1}{2}$. 
\end{proof}
\begin{defn}
Let $p>2$ be an odd prime and $a$ an integer such that $p$$\nmid a$
. The Legendre symbol is defined as follows: $\left(\frac{a}{p}\right)=\begin{cases}
1 & if\,a\,is\,a\,quadratic\,residue\,mod\,p\\
-1 & if\,a\,is\,a\,quadratic\,non\:residue\,mod\,p
\end{cases}$ 
\end{defn}

The Legendre symbol has many significant properties, the primary one
we will require is contained in the next result. (For a proof see
reference \cite{key-1})
\begin{prop}
\label{prop:legendre}Let $p>2$ be a prime and let $a,b$ be integers.
Then $\left(\frac{ab}{p}\right)=\left(\frac{a}{p}\right)\left(\frac{b}{p}\right)$. 
\end{prop}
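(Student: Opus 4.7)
The plan is to exploit the cyclic structure of $\mathbb{F}_p^*$ noted in the background, since this matches the paper's emphasis on group-theoretic arguments. First I would fix a generator $g$ of $\mathbb{F}_p^*$, so every nonzero residue class mod $p$ can be written uniquely as $g^k$ with $0\le k<p-1$. The key intermediate step is to identify the quadratic residues with the even powers of $g$: if $k=2m$ then $g^k=(g^m)^2\in Q(p)$, and conversely if $g^k=x^2$ with $x=g^j$ then $g^k=g^{2j}$, so $k\equiv 2j\pmod{p-1}$, and since $p-1$ is even this forces $k$ to be even. The number of even exponents $k$ in the range $0\le k<p-1$ is $(p-1)/2$, which is also consistent with the previous proposition.

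Once this characterization is in hand, the Legendre symbol becomes transparent: $\left(\frac{g^k}{p}\right)=(-1)^k$. Multiplicativity is then immediate. Writing $a\equiv g^j$ and $b\equiv g^k \pmod p$, we have $ab\equiv g^{j+k}$, whence
\[
\left(\frac{ab}{p}\right)=(-1)^{j+k}=(-1)^j(-1)^k=\left(\frac{a}{p}\right)\left(\frac{b}{p}\right).
\]
In other words, $a\mapsto \left(\frac{a}{p}\right)$ is precisely the composition of the isomorphism $\mathbb{F}_p^*\cong \mathbb{Z}/(p-1)\mathbb{Z}$ with the reduction $\mathbb{Z}/(p-1)\mathbb{Z}\to \mathbb{Z}/2\mathbb{Z}\cong\{\pm1\}$, so it is a group homomorphism.

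The main obstacle is really the step identifying quadratic residues with even powers of $g$, and in particular the use of the hypothesis $p>2$ (which makes $p-1$ even); without this, the exponent parity argument collapses. After that, no calculation is required beyond adding exponents. An alternative route would be via Euler's criterion $\left(\frac{a}{p}\right)\equiv a^{(p-1)/2}\pmod p$, from which multiplicativity follows by simply multiplying the congruences and noting that both sides lie in $\{\pm 1\}$, which are distinct mod $p$ for $p>2$; I would mention this only as a remark, since the cyclic-group approach is more in the spirit of the paper.
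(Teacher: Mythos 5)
Your proof is correct. Note that the paper itself does not prove this proposition --- it cites reference \cite{key-1} --- so there is no in-paper argument to compare against; your self-contained proof via the cyclic structure of $\mathbb{F}_{p}^{*}$ is a standard and complete one. The two points that need care are both handled: the identification of $Q(p)$ with the even powers of a generator $g$ uses $2\mid p-1$ (so reduction of exponents mod $p-1$ preserves parity, making $g^{k}\mapsto(-1)^{k}$ well defined), and the resulting description of $a\mapsto\left(\frac{a}{p}\right)$ as the composite $\mathbb{F}_{p}^{*}\cong\mathbb{Z}/(p-1)\mathbb{Z}\rightarrow\mathbb{Z}/2\mathbb{Z}$ makes multiplicativity immediate. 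This approach also fits the paper's stated didactic theme of using homomorphisms to transfer information between groups, arguably better than the Euler-criterion route you mention as an alternative. One tiny remark: the statement says ``let $a,b$ be integers,'' but the paper's definition of the Legendre symbol requires $p\nmid a$ and $p\nmid b$, which your argument implicitly (and correctly) assumes.
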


As we stated, the set of nonzero elements of a finite field is a cyclic
group. A generator for such a group is called a primitive element.
Hence, the group $\mathbb{F}_{p}^{*}$ has primitive elements. We
remark that a quadratic residue cannot be a primitive element. (see
reference \cite{key-1})

Another concept we will require, which comes from the theory of finite
dimensional extension fields, is the norm (see reference \cite{key-2}).
We will briefly describe this concept in more generality than we require.
Suppose $L$$/K$ is a finite extension of fields and let $u\in L$.
Define $T_{u}:L\rightarrow L$ by $T_{u}(x)=ux$, which is clearly
a linear transformation. Fix any basis for $L$$/K$ and let $N_{L/K}:L\rightarrow K$
by $N_{L/K}(u)=det\,T_{u}$. $N_{L/K}$ is called the norm map and
it is clearly multiplicative. In the following example we work this
out for the case we will require.
\begin{example}
Let $K=\mathbb{F}_{p}$ and $L=\text{\ensuremath{\mathbb{\mathbb{F}}_{p^{2}}}\ensuremath{=\mathbb{F}{}_{p}} (}w)$
be a quadratic extension. So $w=\sqrt{n}$ where $n$ is a quadratic
nonresidue $mod$ $p$. Let $u=a+b\sqrt{n}$ and $T_{u}$ be defined
as above. Then $T_{u}$$(1)=a+b\sqrt{n}$ and $T_{u}(\sqrt{n})=nb+a\sqrt{n}$.
Hence the matrix of $T_{u}$$=$$\left[\begin{array}{cc}
a & nb\\
b & a
\end{array}\right]$ with $det$$\,T_{u}=$$N_{L/K}(u)=a^{2}-nb^{2}$. From now on we
will just write $N:\mathbb{F}_{p}^{*}\left(\sqrt{n}\right)\rightarrow\mathbb{F}_{p}^{*}$
where $N(a+b\sqrt{n})=a^{2}-nb^{2}$ and this is a homomorphism of
groups (of course, the fact that $N$ is multiplicative could have
been proven directly). In the next section we will prove $N$ is onto. 
\end{example}

\section{The main Result}

In this section we assume $p$ is an odd prime and $n$ is a quadratic
nonresidue $mod$$\,p$ ($n$ is an integer which either equals $-1$
or is greater than or equal to $2$). As discussed in section \ref{sec:background},
$\mathbb{F}_{p^{2}}^{*}$ is a cyclic group of order $p^{2}-1$. The
main result of this paper Theorem \ref{thm:main result}, classifies
the generators of $\mathbb{F}_{p^{2}}^{*}$ in terms of generators
of $Ker\,N$ and $\mathbb{F}_{p}^{*}.$ Throughout this section we
will let $\mathbb{F}_{p^{2}}^{*}$ denote the nonzero elements of
the quadratic extension $\mathbb{F}{}_{p}$$(\sqrt{n})$ ($n$ a quadratic
nonresidue $mod$ $p$).
\begin{prop}
\label{prop:norm map}Suppose $p$ is an odd prime and $n$ is a quadratic
nonresidue $mod\,p$. Then the map \textup{$N:\mathbb{F}_{p^{2}}^{*}\rightarrow\mathbb{F}_{p}^{*}$
}given by $N(a$+b$\sqrt{n})=a^{2}-nb^{2}$ is an onto group homomorphism. 
\end{prop}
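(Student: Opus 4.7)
The proposition has two assertions: $N$ is a homomorphism, and $N$ is surjective. The homomorphism property is essentially already done by the Example preceding the proposition, since $N(u)=\det T_u$ and $T_{uv}=T_u T_v$ gives $N(uv)=\det(T_u T_v)=\det(T_u)\det(T_v)=N(u)N(v)$; alternatively it follows from a direct expansion of $(a^{2}-nb^{2})(c^{2}-nd^{2})$ against $((ac+nbd)^{2}-n(ad+bc)^{2})$. So I would simply record this and focus the work on surjectivity.

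For surjectivity, my plan is to exploit the subgroup structure of the cyclic group $\mathbb{F}_{p}^{*}$ together with the classification of quadratic residues from the preceding proposition. First I would note that for any $a\in\mathbb{F}_{p}^{*}$, viewed inside $\mathbb{F}_{p^{2}}^{*}$ as $a+0\cdot\sqrt{n}$, one has $N(a)=a^{2}$. Hence $\mathrm{Im}(N)\supseteq Q(p)$, and since $|Q(p)|=(p-1)/2$ by the earlier proposition, the image has order at least $(p-1)/2$. Because $\mathrm{Im}(N)$ is a subgroup of the cyclic group $\mathbb{F}_{p}^{*}$ of order $p-1$, its order divides $p-1$, so it must be either $(p-1)/2$ or $p-1$. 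It therefore suffices to exhibit a single element $u\in\mathbb{F}_{p^{2}}^{*}$ with $N(u)$ a quadratic non-residue.

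The key step is to produce such a non-residue via pigeonhole. Consider the values $N(a+\sqrt{n})=a^{2}-n$ as $a$ ranges over $\mathbb{F}_{p}$. As $a$ varies, $a^{2}$ takes exactly $(p+1)/2$ distinct values, so the shifted quantities $a^{2}-n$ also take $(p+1)/2$ distinct values in $\mathbb{F}_{p}$. None of these values equals $0$, since $a^{2}=n$ has no solution ($n$ is a non-residue). But $\mathbb{F}_{p}^{*}$ contains only $(p-1)/2$ quadratic residues, so at least one $a^{2}-n$ must be a non-residue. This places a non-residue in $\mathrm{Im}(N)$, ruling out the case $|\mathrm{Im}(N)|=(p-1)/2$, and forcing $\mathrm{Im}(N)=\mathbb{F}_{p}^{*}$.

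The main obstacle is precisely that pigeonhole count: everything else is routine bookkeeping about subgroups of a cyclic group and the previously established count of quadratic residues. As an aside, a slicker but less in-style alternative would be to observe via Fermat's little theorem and Euler's criterion that $(a+b\sqrt{n})^{p}=a-b\sqrt{n}$ (using $n^{(p-1)/2}\equiv-1\pmod{p}$), so $N(u)=u^{p+1}$, after which surjectivity follows because the $(p+1)$st power map on the cyclic group $\mathbb{F}_{p^{2}}^{*}$ of order $(p-1)(p+1)$ has image of order exactly $p-1$; this alternative will also be useful for the main theorem to come.
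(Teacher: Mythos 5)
Your proof is correct, and it reaches surjectivity by a genuinely different route for the one nontrivial step. Both you and the paper begin by observing that $N(a)=a^{2}$ puts every quadratic residue in the image, so everything reduces to producing a single quadratic nonresidue of the form $a^{2}-n$. The paper manufactures one constructively: it takes $w=-n$, picks the smallest $k>0$ with $wk$ a nonresidue, writes $wk=c^{2}-n$ with $c^{2}=w(k-1)$ a residue, and then uses the multiplicativity of the Legendre symbol (Proposition \ref{prop:legendre}) to show explicitly that $N(ac+a\sqrt{n})=a^{2}(c^{2}-n)$ sweeps out \emph{all} nonresidues as $a$ varies. You instead get the nonresidue nonconstructively by pigeonhole --- the $(p+1)/2$ distinct nonzero values $a^{2}-n$ cannot all fit among the $(p-1)/2$ residues --- and then finish by a subgroup-index argument: $\mathrm{Im}(N)$ is a subgroup of the cyclic group $\mathbb{F}_{p}^{*}$ containing $Q(p)$, hence has index $1$ or $2$, and index $2$ is ruled out. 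Your counting step is arguably cleaner than the paper's ``smallest $k$'' construction (which quietly needs the degenerate case $k=1$, $c=0$ to be checked), while the paper's version has the pedagogical advantage of exhibiting explicit preimages for every nonresidue. Your closing aside, that $N(u)=u^{p+1}$ via the Frobenius and Euler's criterion, is also correct and gives the shortest proof of all; the paper does not use it here, though its proof of Theorem \ref{thm:main result} implicitly relies on the same conjugation automorphism $u\mapsto\hat{u}$.
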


\begin{proof}
The fact that $N$ is a homomorphism was discussed in section \ref{sec:background}.
We will prove it is onto.

For any $w\in\mathbb{F}_{p}^{*}$ define $h:\mathbb{F}_{p}^{*}\rightarrow\mathbb{F}_{p}^{*}$
by $h(x)=wx.$ Note that $h$ is bijective. Choose the smallest integer
$k>0$ such that $wk$ is a quadratic nonresidue. Then $wk=w(k-1)+w=c^{2}+w$,
($c^{2}=w(k-1)),$ for some $c.$ Now, choosing $w=-n$ we have that
$-nk=c^{2}-n$ and we have the bijective map $g:\mathbb{F}_{p}^{*}\rightarrow\mathbb{F}_{p}^{*}$
defined by $g(x)=x(c^{2}-n).$ We know, $c^{2}-n$ is a quadratic
nonresidue and by Proposition \ref{prop:legendre}, whenever $x$
is a quadratic residue $g(x)$ is a quadratic nonresidue. So, as $x$
varies among all squares, $g(x)$ maps onto all quadratic nonresidues
in $\mathbb{F}_{p}^{*}.$

Returning to $N$, $N(a)=a^{2}$ and so $N$ maps onto all quadratic
residues. On the other hand, $N(ac+a\sqrt{n})=N(a(c+\sqrt{n}))=a^{2}(c^{2}-n)$
and so $N$ also maps onto all quadratic nonresidues.
\end{proof}
\begin{rem}
By The Fundamental Homomorphism Theorem $|Ker\,N|=p+1.$
\end{rem}

\begin{lem}
\label{lem:kernel}For any $u\in\mathbb{F}_{p^{2}}^{*}$, $\frac{u^{2}}{N(u)}\in Ker\,N$.
\end{lem}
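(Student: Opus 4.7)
The plan is to simply apply the norm map $N$ to the element $\frac{u^{2}}{N(u)}$ and verify that the result is $1$, using multiplicativity of $N$ together with the explicit form of $N$ on the subfield $\mathbb{F}_{p}^{*} \subseteq \mathbb{F}_{p^{2}}^{*}$.

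First I would use that $N$ is a group homomorphism (Proposition \ref{prop:norm map}), so that
\[
N\!\left(\frac{u^{2}}{N(u)}\right) \;=\; \frac{N(u)^{2}}{N(N(u))}.
\]
Next I would observe that $N(u)$ lies in $\mathbb{F}_{p}^{*}$, which sits inside $\mathbb{F}_{p^{2}}^{*}$ as the elements of the form $a + 0\cdot\sqrt{n}$. For any such $a \in \mathbb{F}_{p}^{*}$, the defining formula gives $N(a) = a^{2} - n\cdot 0^{2} = a^{2}$. Applying this with $a = N(u)$ yields $N(N(u)) = N(u)^{2}$.

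Substituting back, the numerator and denominator cancel and we get $N\!\left(\frac{u^{2}}{N(u)}\right) = 1$, which is exactly the statement that $\frac{u^{2}}{N(u)} \in \operatorname{Ker} N$. There is really no obstacle here: the only subtle point is remembering that the norm of a scalar from the base field is its square, not the scalar itself, which is what makes the cancellation work cleanly.
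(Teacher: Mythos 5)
Your proof is correct and follows essentially the same route as the paper: both arguments reduce to multiplicativity of $N$ together with the observation that the norm of a base-field element $c=c+0\cdot\sqrt{n}$ is $c^{2}$ (the paper packages this as $N(cu)=c^{2}N(u)$, while you compute $N(N(u))=N(u)^{2}$ separately, but the content is identical). No issues.
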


\begin{proof}
Note that for any $c\in\mathbb{Z},$ $N(cu)=N(ca+cb\sqrt{n})=(ca)^{2}-n(cb)^{2}=c^{2}N(u)$.
Hence
\[
N\left(\frac{u^{2}}{N(u)}\right)=\frac{1}{N(u)^{2}}(N(u))^{2}=1
\]
\end{proof}
\begin{lem}
\label{homomorphism}Suppose $G$ and $H$ are multiplicative groups,
with $H$ abelian. If $g,h:G\rightarrow H$ are homomorphisms, then
the map $f:G\rightarrow H$ defined by $f(x)=g(x)(h(x))^{-1}$ is
a homomorphism.
\end{lem}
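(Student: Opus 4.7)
The plan is to verify the homomorphism property directly: given $x, y \in G$, I want to show $f(xy) = f(x)f(y)$. The two ingredients in play are (i) the fact that $g$ and $h$ are individually homomorphisms, which lets me split $g(xy)$ and $h(xy)$, and (ii) the commutativity of $H$, which will let me rearrange factors so that the result assembles into $f(x)f(y)$.

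First I would expand $f(xy) = g(xy)\bigl(h(xy)\bigr)^{-1}$. Using that $g$ and $h$ are homomorphisms, this equals $g(x)g(y)\bigl(h(x)h(y)\bigr)^{-1}$, and using the general identity $(ab)^{-1} = b^{-1}a^{-1}$ in any group, this becomes $g(x)g(y)\,h(y)^{-1}h(x)^{-1}$. On the other hand, $f(x)f(y) = g(x)\,h(x)^{-1}\,g(y)\,h(y)^{-1}$. The two expressions agree precisely when the middle factors commute, which is exactly where the hypothesis that $H$ is abelian enters: since all elements lie in $H$, I may freely rearrange $g(y)\,h(y)^{-1}\,h(x)^{-1}$ as $h(x)^{-1}\,g(y)\,h(y)^{-1}$ (or, equivalently, commute $g(y)$ past $h(x)^{-1}$ in the second expression).

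There is really no serious obstacle here; the only point that deserves attention is making the use of commutativity explicit, since without it the claim fails (a product of two homomorphisms into a nonabelian group need not be a homomorphism). I would therefore state one rearrangement step clearly, noting that it uses abelianness of $H$, and conclude $f(xy) = f(x)f(y)$, which establishes that $f$ is a homomorphism.
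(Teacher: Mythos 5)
Your argument is correct and is essentially identical to the paper's proof: both expand $f(xy)=g(x)g(y)(h(x)h(y))^{-1}=g(x)g(y)h(y)^{-1}h(x)^{-1}$ and then invoke commutativity of $H$ to rearrange this into $f(x)f(y)$. Nothing is missing.
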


\begin{proof}
For any $x,y\in G,$ $f(xy)=g(xy)(h(xy))^{-1}=g(x)g(y)(h(x)h(y))^{-1}=g(x)g(y)(h(y))^{-1}(h(x))^{-1}$
and since $H$ is abelian 
\end{proof}
$g(x)g(y)(h(y))^{-1}(h(x))^{-1}=g(x)(h(x))^{-1}g(y)(h(y))^{-1}=f(x)f(y).$
\begin{prop}
\label{prop:kernel map}Let $p$ be an odd prime and $n$ a quadratic
nonresidue. Then the map \textup{$f:\mathbb{F}_{p^{2}}^{*}\rightarrow Ker\,N$
given by $f(u)=\frac{u^{2}}{N(u)}$ }

is an onto group homomorphism.
\end{prop}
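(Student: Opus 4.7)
The plan is to split the statement into two parts: showing $f$ is a homomorphism, and showing $f$ is onto.

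For the homomorphism part, I would apply Lemma \ref{homomorphism}. The squaring map $g:\mathbb{F}_{p^{2}}^{*}\to\mathbb{F}_{p^{2}}^{*}$ given by $g(u)=u^{2}$ is a homomorphism because $\mathbb{F}_{p^{2}}^{*}$ is abelian, and the norm map, viewed as $N:\mathbb{F}_{p^{2}}^{*}\to\mathbb{F}_{p^{2}}^{*}$ via the inclusion $\mathbb{F}_{p}^{*}\hookrightarrow\mathbb{F}_{p^{2}}^{*}$, is a homomorphism by Proposition \ref{prop:norm map}. Applying Lemma \ref{homomorphism} with codomain $H=\mathbb{F}_{p^{2}}^{*}$ (which is abelian) shows that $f(u)=g(u)N(u)^{-1}=u^{2}/N(u)$ is a homomorphism from $\mathbb{F}_{p^{2}}^{*}$ to $\mathbb{F}_{p^{2}}^{*}$. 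Lemma \ref{lem:kernel} then shows that the image of $f$ actually lies inside $Ker\,N$, so $f$ is a well-defined group homomorphism into $Ker\,N$.

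For surjectivity, the idea is to use the First Isomorphism Theorem together with the fact that $|Ker\,N|=p+1$ (from the remark following Proposition \ref{prop:norm map}). Since $\mathrm{Im}\,f$ is a subgroup of $Ker\,N$, it suffices to show $|\mathrm{Im}\,f|=p+1$, equivalently $|Ker\,f|=(p^{2}-1)/(p+1)=p-1$. To identify $Ker\,f$, I would observe that $u\in Ker\,f$ iff $u^{2}=N(u)$. Writing $u=a+b\sqrt{n}$, one computes $u^{2}=(a^{2}+nb^{2})+2ab\sqrt{n}$, so $u^{2}\in\mathbb{F}_{p}$ forces $2ab=0$, giving $a=0$ or $b=0$. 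The subcase $a=0$, $b\neq 0$ yields $nb^{2}=-nb^{2}$, hence $2nb^{2}=0$, which is impossible since $p$ is odd and $n$ is a nonzero residue. So $b=0$, i.e., $u\in\mathbb{F}_{p}^{*}$; conversely, every $u\in\mathbb{F}_{p}^{*}$ satisfies $u^{2}=N(u)$ directly from the formula for $N$. Thus $Ker\,f=\mathbb{F}_{p}^{*}$ has order $p-1$, completing the count.

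I do not anticipate a serious obstacle: the homomorphism claim is a direct application of the lemmas already prepared, and the onto claim reduces to the brief kernel computation above. The only point requiring any care is the case split for $u\in Ker\,f$, and that is settled by elementary field arithmetic once one notes that $2nb^{2}=0$ cannot hold nontrivially when $p$ is odd and $n\not\equiv 0\pmod{p}$.
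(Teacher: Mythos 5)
Your proposal is correct and follows essentially the same route as the paper: Lemma \ref{homomorphism} for the homomorphism claim, then computing $Ker\,f=\mathbb{F}_{p}^{*}$ and invoking the First Isomorphism Theorem together with $|Ker\,N|=p+1$ for surjectivity. The only cosmetic difference is that you extract $2ab=0$ from the $\sqrt{n}$-coefficient and split into cases, whereas the paper reads off $2nb^{2}=0$ directly from the constant term; both settle the kernel computation.
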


\begin{proof}
Note that the maps $g:\mathbb{F}_{p^{2}}^{*}\rightarrow\mathbb{F}_{p^{2}}^{*}$
given by $g(u)=u^{2}$ and $N$ are homomorphisms and so the fact
that $f$ is a homomorphism follows from Lemma \ref{homomorphism}.

If $a\in\mathbb{F}_{p}^{*}$ then $f(a)=\frac{a^{2}}{N(a)}=1.$ On
the other hand, if $u=a+b\sqrt{n}\in Ker\,f$, then $u^{2}=N(u)$
implies $a^{2}+nb^{2}=a^{2}-nb^{2}$ which gives $nb^{2}=0$, giving
$b=0$ and thus $u=a\in$$\mathbb{F}_{p}^{*}$ . We conclude that
$Ker\,f=\text{\ensuremath{\mathbb{F}_{p}^{*}} }$ and we obtain $\mathbb{F}_{p^{2}}^{*}/\mathbb{F}_{p}^{*}\cong im\,f.$
Therefore, $|im\,f|=p+1=|Ker\,N|$, proving $f$ is onto. 
\end{proof}
\begin{lem}
\label{lem:p+1,p-1}If $p$ is an odd prime, then $2$ is greatest
common divisor of $p-1$ and $p+1.$
\end{lem}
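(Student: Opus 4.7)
The plan is to show both that $2$ is a common divisor of $p-1$ and $p+1$, and that no larger common divisor can exist. The argument is entirely elementary and rests on the classical fact that any common divisor of two integers also divides their difference.

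First I would observe that since $p$ is an odd prime, in particular $p$ is odd, so both $p-1$ and $p+1$ are even. This immediately gives $2 \mid \gcd(p-1, p+1)$, establishing the lower bound.

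Next I would establish the upper bound. Let $d = \gcd(p-1, p+1)$. Since $d$ divides each of $p-1$ and $p+1$, it must divide their difference $(p+1)-(p-1) = 2$. Hence $d \in \{1,2\}$, and combined with the previous paragraph we conclude $d = 2$.

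There is no real obstacle here; the only thing worth being careful about is citing the standard fact that if $d$ divides integers $a$ and $b$ then $d$ divides $a-b$, which is immediate from the definition of divisibility. The entire proof fits comfortably in two short sentences.
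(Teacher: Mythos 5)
Your proof is correct and follows essentially the same route as the paper: both arguments rest on the observation that any common divisor of $p-1$ and $p+1$ divides their difference $(p+1)-(p-1)=2$. You are slightly more complete than the paper in that you also explicitly verify the lower bound (that $2$ actually divides both numbers because $p$ is odd), a step the paper leaves implicit.
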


\begin{proof}
If $k$ is any divisor of $p-1$ and $p+1,$ then $k$ divides $(p+1)-(p-1)=2.$
\end{proof}
We now turn to the main result of the paper.
\begin{thm}
\label{thm:main result}Suppose $p$ is an odd prime and $n$ an integer
such that $n=-1$ or $n\geq2$ and assume $n$ a quadratic nonresidue
$mod\,p.$ Then $u=a+b\sqrt{n}$ is a generator of \textup{$\mathbb{F}_{p^{2}}^{*}$
if and only if $N(u)=a^{2}-nb^{2}$ is a generator of $\mathbb{F}_{p}^{*}$
(i.e. a primitive root $mod\,p)$ and $f(u)=\frac{u^{2}}{N(u)}$ is
a generator of the $Ker\,N.$ }
\end{thm}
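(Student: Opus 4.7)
The plan is to prove both directions by tracking orders under the homomorphisms $N$ and $f$ from Propositions \ref{prop:norm map} and \ref{prop:kernel map}. The forward direction is immediate from surjectivity: if $u$ generates $\mathbb{F}_{p^2}^*$ and $\varphi: \mathbb{F}_{p^2}^* \twoheadrightarrow H$ is any onto homomorphism, then every $h \in H$ has the form $\varphi(u^k) = \varphi(u)^k$, so $\varphi(u)$ generates $H$. Applying this to $N$ and $f$ shows $N(u)$ generates $\mathbb{F}_p^*$ and $f(u)$ generates $\text{Ker}\,N$.

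For the backward direction, assume $N(u)$ has order $p-1$ and $f(u)$ has order $p+1$, and let $m = \text{ord}(u)$ in $\mathbb{F}_{p^2}^*$. Applying $N$ to $u^m = 1$ gives $N(u)^m = 1$, so $(p-1) \mid m$; similarly $f(u)^m = 1$ forces $(p+1) \mid m$. Hence $\mathrm{lcm}(p-1,p+1) \mid m$. By Lemma \ref{lem:p+1,p-1}, $\gcd(p-1,p+1)=2$, so $\mathrm{lcm}(p-1,p+1) = (p^2-1)/2$. Combined with $m \mid p^2-1$, this leaves only two possibilities: $m = (p^2-1)/2$ or $m = p^2-1$.

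The main obstacle is ruling out the spurious value $m = (p^2-1)/2$. Here I would use the quadratic residue characterization: an element $v$ of the cyclic group $\mathbb{F}_{p^2}^*$ satisfies $v^{(p^2-1)/2} = 1$ if and only if $v$ is a square in $\mathbb{F}_{p^2}^*$. So if $m = (p^2-1)/2$, then $u = w^2$ for some $w \in \mathbb{F}_{p^2}^*$, and by multiplicativity of the norm, $N(u) = N(w)^2$ is a quadratic residue in $\mathbb{F}_p^*$. This contradicts the hypothesis that $N(u)$ generates $\mathbb{F}_p^*$, since (as noted just after Proposition \ref{prop:legendre}) a quadratic residue is never a primitive root. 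Therefore $m = p^2-1$, so $u$ generates $\mathbb{F}_{p^2}^*$, completing the proof.
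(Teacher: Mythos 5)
Your proof is correct, and the decisive step is genuinely different from the paper's. Both arguments agree up to the point where $\mathrm{ord}(u)$ is pinned down to $(p^{2}-1)/2$ or $p^{2}-1$: the forward direction via surjectivity of $N$ and $f$, and the divisibility of $\mathrm{ord}(u)$ by $\mathrm{lcm}(p-1,p+1)=(p^{2}-1)/2$ via Lemma \ref{lem:p+1,p-1}. To exclude the half order, the paper raises $u$ to the power $(p^{2}-1)/4$, shows this lies in $\{\pm1\}$ using the conjugate $\hat{u}$ and the factorizations $N(u)=u\hat{u}$ and $f(u)=u/\hat{u}$, and then splits into the cases $p\equiv1$ and $p\equiv3\pmod 4$, contradicting $\mathrm{ord}(N(u))=p-1$ in the first case and $\mathrm{ord}(f(u))=p+1$ in the second. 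You instead invoke the Euler-type criterion in the cyclic group $\mathbb{F}_{p^{2}}^{*}$: $v^{(p^{2}-1)/2}=1$ exactly when $v$ is a square, so $\mathrm{ord}(u)=(p^{2}-1)/2$ would force $u=w^{2}$, whence $N(u)=N(w)^{2}$ is a quadratic residue and hence not a primitive root $mod\,p$. This is shorter, avoids the case split on $p\bmod 4$, and ties in naturally with the quadratic-residue material of Section \ref{sec:background}; it also makes clear that, once the two divisibility conditions are established, the hypothesis on $N(u)$ alone rules out the half order (symmetrically, $f(u)=f(w)^{2}$ would be a square in the cyclic group $Ker\,N$ of even order $p+1$, so the hypothesis on $f(u)$ alone would serve equally well). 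What the paper's computation buys is that it stays entirely inside elementary manipulations with $u$ and $\hat{u}$ and presupposes nothing about squares in $\mathbb{F}_{p^{2}}^{*}$ beyond the easy fact that $v^{2}=1$ forces $v=\pm1$.
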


\begin{proof}
If $u$ is a generator of $\mathbb{F}_{p^{2}}^{*}$ then the onto
homomorphisms $N$ and $f$ (see propositions \ref{prop:norm map}
and \ref{prop:kernel map}) imply $N(u)$ and $f(u)$ are generators
of $\mathbb{F}_{p}^{*}$ and $Ker\,N,$ respectively.

Conversely, assume $N(u)$ and $f(u)$ are generators for $\mathbb{F}_{p}^{*}$
and $Ker\,N$. Then the $ord(N(u))=p-1$ and $ord$$\left(f(u)\right)=p+1.$
We know $ord(u)|p^{2}-1,$ our goal is to prove they are equal. 

Let $u=a+b\sqrt{n}$ and set $\hat{u}=a-b\sqrt{n}$. Then the map
$\varphi(u)=\hat{u}$ is an automorphism of $\mathbb{F}{}_{p}$. Notice,
$N(u)=u\hat{u}$ and $f(u)=\frac{u^{2}}{N(u)}=\frac{u}{\hat{u}}$.
If $u^{k}=1$ then $\hat{u}^{k}=1$ and so $(N(u))^{k}=(u\hat{u})^{k}=1.$
Also, $(f(u))^{k}=(\frac{u}{\hat{u}})^{k}=1.$ Hence, the orders of
$N(u)$ and $f(u),$ respectively $p-1$ and $p+1$, must divide $k$
and so the $ord(u)$ is a multiple of both $p-1$ and $p+1.$ By Lemma
\ref{lem:p+1,p-1}, the $ord(u)$ is either $p^{2}-1$ or $\frac{1}{2}(p^{2}-1).$
We must prove that it is $p^{2}-1.$

For $v=r+s\sqrt{n},$ if $1=v^{2}=(r^{2}+ns^{2})+2rs\sqrt{n}$ then
$r=0$ or $s=0.$ If $s=0,$ then $v=r=\pm1.$ If $r=0,$ then $ns^{2}=1,$
which contradicts the assumption that $n$ is a quadratic nonresidue. 

If $ord(u)=\frac{1}{2}(p-1)(p+1)$ then $1=(u^{\frac{1}{4}(p-1)(p+1)})^{2}=(\hat{u}^{\frac{1}{4}(p-1)(p+1)})^{2}.$
Thus, $u^{\frac{1}{4}(p-1)(p+1)}=\hat{u}^{\frac{1}{4}(p-1)(p+1)}=\pm1.$
Hence, 
\[
(N(u))^{\frac{1}{4}(p-1)(p+1)}=(f(u))^{\frac{1}{4}(p-1)(p+1)}=1
\]
.
\begin{casenv}
\item Assume $p\equiv1mod\:4.$ In this case $p-1$ is divisible by $4$
and we can write $1=(N(u))^{\frac{1}{4}(p-1)(p+1)}=((u\hat{u})^{p+1})^{\frac{1}{4}(p-1)}=(u\hat{u})^{\frac{1}{2}(p-1)},$
since $(u\hat{u})^{p+1}=(u\hat{u})^{2},$ but this contradicts the
fact that the $ord(N(u))$ is $p-1.$ 
\item Assume $p\equiv3mod\:4.$ In this case $p+1$ is divisible by $4$
and we can write $1=((\frac{\hat{u}}{u})^{p-1})^{\frac{1}{4}(p+1)}=(\frac{u}{\hat{u}})^{\frac{1}{2}(p+1)},$
since $(\frac{\hat{u}}{u})^{(p-1)}=(\frac{\hat{u}}{u})^{(p+1)}(\frac{u}{\hat{u}})^{2}$
and this contradicts the fact that $ord(f(u))=p+1.$ 
\end{casenv}
By cases $1$ and $2$ the order of $u$ cannot be $\frac{1}{2}(p^{2}-1)$
and so must be $p^{2}-1$ and thus $u$ generates $\mathbb{F}_{p^{2}}^{*}.$
\end{proof}

\end{document}